\documentclass[11pt]{article}

\usepackage{amsthm}
\usepackage{amssymb}
\usepackage{amsmath}
\usepackage{pinlabel}
\usepackage{graphicx}


\theoremstyle{plain}
\newtheorem{tetel}{Theorem}[section]
\newtheorem{all}[tetel]{Proposition}
\newtheorem{lemma}[tetel]{Lemma}
\newtheorem{kov}[tetel]{Corollary}

\theoremstyle{definition}
\theoremstyle{remark}\newtheorem{megj}[tetel]{Remark}
\newtheorem{pelda}[tetel]{Example}


\begin{document}

\title{Meridian twisting of closed braids and the Homfly polynomial}
\author{Tam\'as K\'alm\'an\\ University of Tokyo}
\maketitle

\begin{abstract}
Let $\beta$ be a braid on $n$ strands, with exponent sum $w$. Let $\Delta$ be the Garside half-twist braid. We prove that the coefficient of $v^{w-n+1}$ in the Homfly polynomial of the closure of $\beta$ agrees with $(-1)^{n-1}$ times the coefficient of $v^{w+n^2-1}$ in the Homfly polynomial of the closure of $\beta\Delta^2$. This coincidence implies that the lower Morton--Franks-Williams estimate for the $v$--degree of the Homfly polynomial of $\widehat\beta$ is sharp if and only if the upper MFW estimate is sharp for the $v$--degree of the Homfly polynomial of $\widehat{\beta\Delta^2}$. 
\end{abstract}

In this note, an old story is told again in a way that yields a surprising new result (Figure \ref{fig:newton38} shows a representative example). Indeed, the only parts that could not have been written twenty years ago are some speculation about Khovanov--Rozansky homology and Remark \ref{rem:dan}, which do not belong to the proof, and the proof of Proposition \ref{murakami}. The latter is a well known fact and in particular it too can be established using skein theory. The new proof is included for completeness and because it is short.

The main theorem will be stated in the introduction and its proof occupies the entire section \ref{sec:proof}. We conclude the paper with examples and remarks. 




\section{Introduction}




Braids 
will be drawn horizontally. In particular, the standard generators $\sigma_i$ of the braid group $B_n$ \cite{birman} appear as a crossing $\includegraphics[viewport=2 2 20 10]{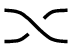}$ 
sandwiched between groups of $i-1$ and $n-i-1$ trivial strands, respectively. $1_n$ is the trivial braid on $n$ strands. The usual closure of the braid $\beta$ will be denoted by $\widehat\beta$. It gets its orientation from orienting the strands of $\beta$ from left to right.

The \emph{framed Homfly polynomial} $H_D$ is a two-variable, integer-coefficient Laurent polynomial in the indeterminates $v$ and $z$. It is associated to any 
oriented link diagram $D$, and it is uniquely defined by the skein relations
\[H_{\includegraphics[totalheight=8pt]
{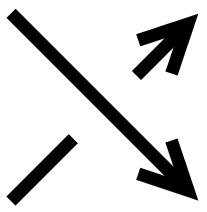}}
-H_{\includegraphics[totalheight=8pt]{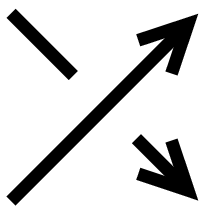}}
=zH_{\includegraphics[totalheight=8pt]
{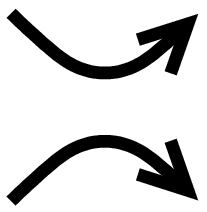}};\quad
H_{\includegraphics[totalheight=8pt]{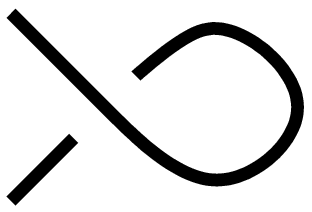}}
=vH_{\includegraphics[totalheight=8pt]{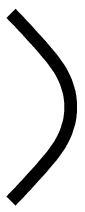}};\quad
H_{\includegraphics[totalheight=8pt]{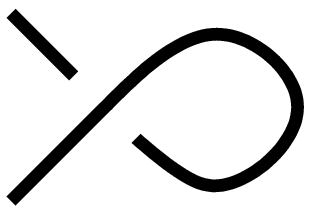}}
=v^{-1}H_{\includegraphics[totalheight=8pt]{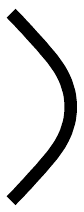}}\]
and the requirement (normalization) that for the crossingless diagram of the unknot (with either orientation), $H_{\includegraphics[totalheight=8pt]{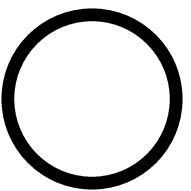}}(v,z)=1$. 
$H$ is invariant under regular isotopy of knot diagrams. The Homfly polynomial itself is \[P_D(v,z)=v^wH_D(v,z),\] where $w$ is the writhe of $D$. Unlike $H$, it is an oriented link invariant: $P(v,z)$ takes the same value for 
diagrams that represent isotopic oriented links.

In this paper, we record Homfly coefficients 
on the $vz$--plane. (We hope that our blurring of the distinction between 
the indeterminates and their exponents will not lead to confusion.) We will often speak about \emph{columns} of the Homfly polynomial. These can be equivalently thought of as polynomials in $z$ that appear as the coefficients of various powers of $v$.

\begin{pelda}\label{ex:newt}
The Homfly polynomial of the torus knot $T(3,5)$ is
\begin{multline*}
P_{T(3,5)}(v,z)=z^8v^8+8z^6v^8-z^6v^{10}+21z^4v^8-7z^4v^{10}\\
+21z^2v^8-14z^2v^{10}+z^2v^{12}+7v^8-8v^{10}+2v^{12}\\
=(z^8+8z^6+21z^4+21z^2+7)v^8-(z^6+7z^4+14z^2+8)v^{10}+(z^2+2)v^{12},
\end{multline*}
and we will write it as shown in Figure \ref{fig:newton35}.
\begin{figure}[h]
\labellist
\small
\pinlabel $z$ at -5 385 
\pinlabel $z=8$ at -55 320
\pinlabel $z=0$ at -55 70
\pinlabel $v=8$ at 370 -5
\pinlabel $v=12$ at 560 -5
\pinlabel $v$ at 670 55
\endlabellist
   \centering
   \includegraphics[width=.5\linewidth]{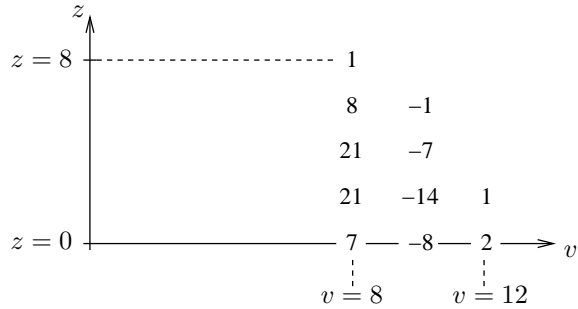}
   \caption{The Homfly polynomial of the $(3,5)$ torus knot}
   \label{fig:newton35}
\end{figure}                              
\end{pelda}

Next, recall the famous Morton--Franks-Williams inequality \cite{fw,morton} which (put somewhat sloppily) says that
\begin{equation}\label{ineq:mfw}
\text{braid index}\ge\text{number of non-zero columns in }P
\end{equation}
for any oriented link type. (For knot theoretical definitions such as braid index, we refer the reader to \cite{crom}.)
In its standard proof, this is derived from the following pair of inequalities. Let $\beta$ be a braid word on 
$n$ strands, with exponent sum $w$.
Then,
\begin{equation}\label{lower}
w-n+1\le\text{lowest }v\text{--degree of }P_{\widehat\beta}
\end{equation}
and
\begin{equation}\label{upper}
\text{highest }v\text{--degree of }P_{\widehat\beta}\le w+n-1.
\end{equation}

We will refer to \eqref{lower} as the \emph{lower MFW estimate} and to \eqref{upper} as the \emph{upper MFW estimate}. Similarly, we might call $w-n+1$ the \emph{lower MFW bound} (for $\beta$) and $w+n-1$ the \emph{upper MFW bound}. Graphically, these inequalities mean that the left column of the Homfly polynomial $P_{\widehat\beta}$ is to the right of $v=w-n+1$ while the right column is to the left of $v=w+n-1$.


\begin{pelda}\label{ex:35}
If we represent $T(3,5)$ with the braid word 
\[\beta=(\sigma_1\sigma_2)^5=\,\includegraphics[width=.17\linewidth,viewport=2 2 37 10]{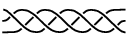},\] then 
$n=3$,
$w=10$,
$w-n+1=8$,
$w+n-1=12$,
and we see that both the lower and the upper MFW estimates are sharp for this braid. Indeed, the Homfly polynomial of Example \ref{ex:newt} has $3$ columns.
\end{pelda}

We will denote the Garside braid (positive half twist) on $n$ strands by $\Delta_n$ or simply by $\Delta$. Then $\Delta^2$ represents a positive full twist. (For example, $\Delta_3=\,\includegraphics[width=.05\linewidth,viewport=2 2 15 10]{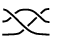}$ and $\Delta_3^2=\,\includegraphics[width=.1\linewidth,viewport=2 2 28 10]{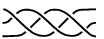}$.) Since $\Delta^2$ contains $n(n-1)$ positive crossings, the braid $\beta\Delta^2$ still has $n$ strands 
but exponent sum $w+n(n-1)$. Thus, 
\begin{equation}\label{newupper}
\text{the upper MFW bound for }\beta\Delta^2\text{ is }w+n(n-1)+n-1=w+n^2-1.
\end{equation}

Now we are ready to state our main theorem.

\begin{tetel}\label{thm:jobbbal}
For any braid $\beta$ on $n$ strands, the lower MFW estimate is sharp if and only if the upper MFW estimate is sharp for the braid $\beta\Delta^2$. If this is the case, then 
\begin{equation}\label{eq:jobbbal}
\text{left column of }P_{\widehat\beta}=(-1)^{n-1}\text{ right column of }P_{\widehat{\beta\Delta^2}}.
\end{equation}
\end{tetel}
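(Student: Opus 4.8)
The plan is to work with the framed Homfly polynomial $H$ and the skein algebra of the annulus (equivalently, the Hecke algebra of type $A$), since the full twist $\Delta^2$ is central and acts as a scalar on each irreducible summand. First I would recall that the closure map $\widehat{\cdot}$ factors through the Hecke algebra $H_n$ of the braid group: writing the Jones--Wenzl-type idempotent decomposition $1_n = \sum_\lambda e_\lambda$ over Young diagrams $\lambda$ with $n$ boxes, the element $\Delta^2$ acts on the summand indexed by $\lambda$ by a scalar $c_\lambda(v,z)$ whose $v$--degree is governed by the content sum of $\lambda$. The extreme columns of $P_{\widehat\beta}$ correspond, after this decomposition, to the contributions of the two ``extreme'' diagrams: the single row and the single column (one gives the minimal $v$--power, the other the maximal). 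Multiplying by $\Delta^2$ multiplies each $e_\lambda$-component by $c_\lambda$, and the key computation is that the scalar attached to the single-row diagram shifts the top $v$--degree of that component by exactly $n^2-1-(w+n-1) - (\text{old bottom}) $ relative to how the single-column scalar shifts the bottom — i.e. the two extremes swap roles under the $(-1)^{n-1}$ sign coming from the content of the fully antisymmetric (column) idempotent.

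Concretely, the steps would be: (1) express $H_{\widehat\beta}$ as $\sum_\lambda \langle \beta e_\lambda\rangle$ where $\langle\cdot\rangle$ is the Markov trace, and note that the $v^{\,-(n-1)}$--column of $H_{\widehat\beta}$ (equivalently the $v^{w-n+1}$--column of $P_{\widehat\beta}$) receives a nonzero contribution \emph{only} from the row idempotent $e_{(n)}$, while all other $\lambda$ contribute in strictly higher $v$--degree; dually, the top $v$--column comes only from $e_{(1^n)}$. This is essentially the content of the MFW inequalities and should be citable or provable in a line or two from the eigenvalue structure of a single $\sigma_i$ (eigenvalues $v^{-1}z$-shifted, giving the two boundary columns). (2) Compute the eigenvalue of $\Delta^2$ on $e_{(n)}$ and on $e_{(1^n)}$: on the symmetric part it is (up to a unit) $v^{2\binom n2}$ times a $z$--polynomial with constant term $1$, and on the antisymmetric part it is $(-1)^{n-1}$ times an analogous expression with the opposite extreme behavior. (3) Combine: multiplying $\beta$ by $\Delta^2$ sends the $e_{(n)}$--component of $H_{\widehat\beta}$ (which carries the left column of $P_{\widehat\beta}$) to the $e_{(n)}$--component of $H_{\widehat{\beta\Delta^2}}$, now sitting at the \emph{top} of the allowable $v$--range $w+n^2-1$ because the full twist has pushed every other $\lambda$ even further — wait, rather: one checks that after multiplication the roles of ``lowest'' and ``highest'' among the $c_\lambda$ are exchanged, so the former bottom column becomes the new top column, scaled by $(-1)^{n-1}$.

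The main obstacle I anticipate is pinning down \emph{which} idempotent governs which extreme after the twist, and proving that no other diagram $\lambda$ can ``catch up'' to the extreme $v$--degree $w+n^2-1$ in $P_{\widehat{\beta\Delta^2}}$ — i.e. that the column at $v^{w+n^2-1}$ is still detected by a single idempotent. This requires a clean inequality: for $\lambda\neq(n)$ the quantity (twist eigenvalue top degree) $+$ (MFW top degree of the $e_\lambda$-component of $\widehat\beta$) is strictly less than $w+n^2-1$, with equality exactly for $\lambda=(n)$ and exactly when the lower MFW estimate for $\beta$ is sharp. A secondary technical point is keeping careful track of the shift between $H$ and $P$ (the writhe changes by $n(n-1)$) and of the sign, which should come out as $(-1)^{n-1}$ from the content/hook data of the column diagram $(1^n)$ versus the row diagram $(n)$. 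An alternative, more elementary route avoiding Hecke-algebra representation theory would be a direct skein-theoretic induction on crossings of $\beta$, using the skein relation to move a full twist across a strand; this is the ``skein theory'' approach alluded to in the introduction, and if the representation-theoretic bookkeeping proves delicate I would fall back on it, tracking the two boundary columns by hand through each skein move.
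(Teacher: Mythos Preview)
Your primary route through the Hecke-algebra idempotent decomposition is genuinely different from the paper's argument, which explicitly \emph{avoids} the Hecke algebra and works entirely with skein theory. The paper proves the equivalent framed statement \eqref{eq:framed} by building a computation tree $\Gamma$ for $\beta$ and a parallel tree $\widetilde\Gamma$ for $\beta\Delta^2$; isotopy, conjugation and Conway splits carry over verbatim, while each positive Markov destabilization in $\Gamma$ is simulated in $\widetilde\Gamma$ by a Conway split on a $\sigma_i^2$ borrowed from $\Delta^2$. The extra branch so created closes up as an $(n-1)$-strand braid, hence by the framed MFW bound cannot contribute to the coefficient of $v^{n-1}$; the remaining terminal nodes, now labeled $\Delta_n^2$ instead of $1_k$, match up via Proposition~\ref{murakami}. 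Your fallback suggestion at the end (``direct skein-theoretic induction'') is thus much closer to what the paper actually does than your main plan.

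There is, however, a concrete error in your main plan. It is \emph{not} true that only the row idempotent $e_{(n)}$ contributes to the $v^{-(n-1)}$-column of $H_{\widehat\beta}$. In the decomposition $H_{\widehat\beta}=\sum_\lambda \chi_\lambda(\beta)\,W_\lambda(v,z)$, every weight function $W_\lambda$ has the full $v$-range $[-(n-1),\,n-1]$: already for $n=2$ (writing $z=q-q^{-1}$) one has $W_{(2)}=(v^{-1}q-vq^{-1})/(q^2-q^{-2})$ and $W_{(1,1)}=(v^{-1}q^{-1}-vq)/(q^2-q^{-2})$, both nonzero at $v^{\pm1}$. So your step~(1), and with it the ``catching-up'' inequality you worry about in step~(3), rest on a false premise. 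Relatedly, the eigenvalue of $\Delta^2$ on the irreducible indexed by $\lambda$ is a pure power of $q$ and carries \emph{no} $v$ at all, so it cannot ``shift $v$-degrees'' or ``exchange extremes'' in the way you describe; your hesitation (``wait, rather'') is a symptom of this.

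The Hecke-algebra route can be salvaged, but by a different mechanism. What one actually needs is the \emph{term-by-term} identity
\[
[v^{-(n-1)}]\,W_\lambda \;=\; (-1)^{n-1}\,q^{2\kappa(\lambda)}\,[v^{\,n-1}]\,W_\lambda\qquad\text{for every }\lambda\vdash n,
\]
where $\kappa(\lambda)=\sum_{x\in\lambda}c(x)$ is the content sum and $q^{2\kappa(\lambda)}$ is the eigenvalue of $\Delta^2$ on the $\lambda$-irreducible. This drops out immediately from the product formula $W_\lambda\propto\prod_{x\in\lambda}(v^{-1}q^{c(x)}-vq^{-c(x)})$, and summing against $\chi_\lambda(\beta)$ gives \eqref{eq:framed} directly. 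Note that the sign $(-1)^{n-1}$ arises uniformly from the product, not from any single extremal $\lambda$. This corrected argument is shorter and more conceptual than the paper's, at the cost of invoking the representation theory that the paper deliberately sidesteps.
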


\begin{pelda}\label{ex:torus}
If we add a full twist to the braid of Example \ref{ex:35}, we observe a change in the Homfly polynomial as shown in Figure \ref{fig:newton38} (cf.\ Corollary \ref{cor:poz}).
\begin{figure}[h]
\labellist
\small
\pinlabel $z$ at 155 910
\pinlabel $z$ at 155 350
\pinlabel $\beta=$ at 70 730
\pinlabel $\beta\Delta^2=$ at 95 175
\pinlabel $v$ at 1105 20
\pinlabel $v$ at 1105 580
\endlabellist
   \centering
   \includegraphics[width=.7\linewidth]{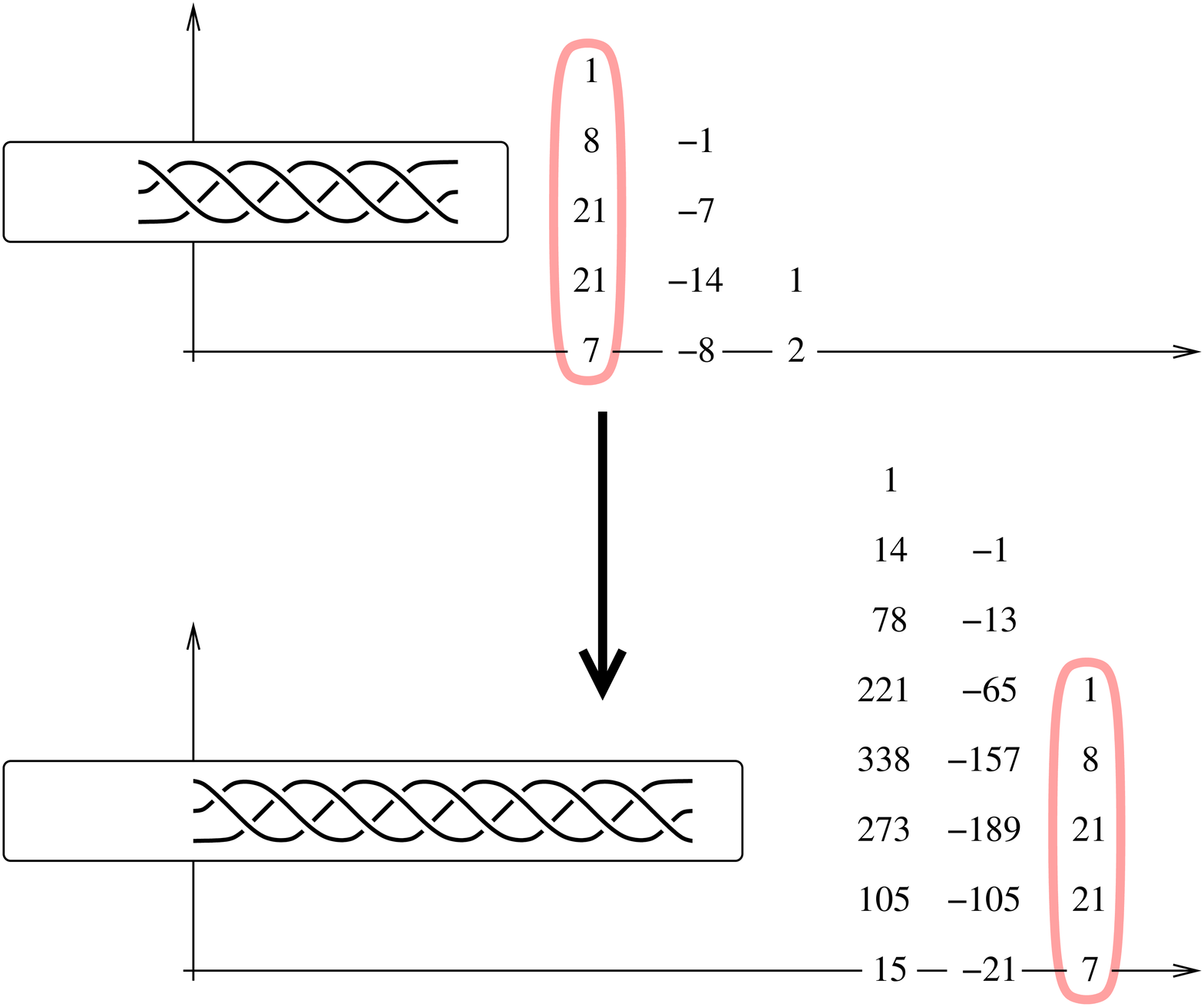}
   \caption{The effect of adding a full twist on the Homfly polynomial}
   \label{fig:newton38}
\end{figure}
\end{pelda}

Somewhat more precisely, we can claim the following.

\begin{tetel}\label{thm:preciz}
For any braid $\beta$ of $n$ strands and exponent sum $w$,
\begin{multline}\label{eq:claim}
\text{the coefficient of }v^{w-n+1}\text{ in }P_{\widehat\beta}\\
=(-1)^{n-1}\text{ the coefficient of }v^{w+n^2-1}\text{ in }P_{\widehat{\beta\Delta^2}}.\end{multline}
\end{tetel}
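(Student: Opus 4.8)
The plan is to work with the framed Homfly polynomial $H$ rather than $P$, since $H$ is invariant under regular isotopy and behaves more transparently under the operations we need. Writing $P_{\widehat\beta}(v,z)=v^w H_{\widehat\beta}(v,z)$ and $P_{\widehat{\beta\Delta^2}}(v,z)=v^{w+n(n-1)}H_{\widehat{\beta\Delta^2}}(v,z)$, the claim \eqref{eq:claim} translates into an identity between the lowest-$v$-degree part of $H_{\widehat\beta}$ and the highest-$v$-degree part of $H_{\widehat{\beta\Delta^2}}$; more precisely, the coefficient of $v^{w-n+1}$ in $P_{\widehat\beta}$ is the coefficient of $v^{-n+1}$ in $H_{\widehat\beta}$, while the coefficient of $v^{w+n^2-1}$ in $P_{\widehat{\beta\Delta^2}}$ is the coefficient of $v^{n-1}$ in $H_{\widehat{\beta\Delta^2}}$. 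So it suffices to prove that the $v^{-n+1}$-column of $H_{\widehat\beta}$ equals $(-1)^{n-1}$ times the $v^{n-1}$-column of $H_{\widehat{\beta\Delta^2}}$, a statement that no longer refers to $w$ and is therefore amenable to an argument by induction on the complexity of $\beta$.

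The natural framework is the Hecke algebra / skein of the annulus, or equivalently Morton's original approach to the MFW inequalities: one represents $H_{\widehat\beta}$ as a weighted trace of $\beta$ acting on an appropriate module, and the two extreme $v$-columns correspond to the two "boundary" pieces of that module. Concretely, I would expand the closed braid diagram using the skein relation at each crossing of $\beta$, pushing everything into the skein of the annulus spanned by the basic closed-braid patterns; the point of the MFW estimates is precisely that, after normalizing, the coefficient of the lowest power of $v$ is governed by the fully-smoothed (identity-permutation) term, and similarly for $\beta\Delta^2$ the coefficient of the highest power of $v$. The key computational input is how the full twist $\Delta^2$ interacts with this: $\Delta^2$ is central in $B_n$, and in the Hecke algebra it acts on each irreducible summand as a scalar (a power of $v$ times a power of $z$-dependent quantity), so multiplying $\beta$ by $\Delta^2$ shifts $v$-degrees in a controlled, summand-by-summand way. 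The relevant fact is that the top $v$-degree behaviour of $\Delta^2$ (coming from keeping all $n(n-1)$ crossings positive and resolving each with the $v$-term of the framing relation, i.e. essentially replacing $\Delta^2$ by $v^{n(n-1)}$ times a scalar on the trivial strand pattern) matches, up to the sign $(-1)^{n-1}$, the bottom $v$-degree behaviour that produces the left column of $H_{\widehat\beta}$. The sign $(-1)^{n-1}$ should emerge as the value of a Homfly-type invariant of the $n$-component unlink arising from fully smoothing $\Delta^2$, or equivalently from the Euler-characteristic-type factor $\left(\frac{v^{-1}-v}{z}\right)^{n-1}$ evaluated at the appropriate extreme; I expect it to come out cleanly once the extremal terms are isolated.

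More explicitly, here is the induction I would set up. Order braid words by the number of negative crossings, then (for positive words) by length. For a positive braid word $\beta$ the left column of $P_{\widehat\beta}$ sits at $v^{w-n+1}$ and one can identify it with the Homfly polynomial of a specific simpler link obtained by a canonical full smoothing; symmetrically, Proposition \ref{murakami} (the Morton--Franks-Williams sharpness statement cited in the excerpt, which I am allowed to assume) should hand me the analogous description of the right column of $P_{\widehat{\beta\Delta^2}}$. For the inductive step I remove a crossing $\sigma_i^{\pm1}$ from $\beta$ using the skein relation $H_{+}-H_{-}=zH_0$: the smoothed term $H_0$ has strictly lower $v$-breadth contribution to the relevant column (it changes the strand number effectively, lowering $w-n+1$ relative to the others), so modulo lower-order terms the left column of $P_{\widehat\beta}$ depends only on the underlying positive braid, and likewise on the $\beta\Delta^2$ side; since $\Delta^2$ is central, $(\beta\sigma_i^{\pm1})\Delta^2 = (\beta\Delta^2)\sigma_i^{\pm1}$, so the same skein relation relates the right columns, and the sign $(-1)^{n-1}$ is preserved because it does not change under adding or removing a crossing. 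The base case is $\beta=1_n$, where $\widehat{1_n}$ is the $n$-component unlink with $P=\left(\frac{v^{-1}-v}{z}\right)^{n-1}$, whose left column is the coefficient of $v^{-n+1}$, namely $(-1)^{n-1}z^{-(n-1)}$ up to the normalization, and $\widehat{\Delta^2}$ is the closure of the full twist whose right column must be checked to equal $(-1)^{n-1}$ times that — a direct computation (e.g. using that $\widehat{\Delta_n^2}$ is a well-understood link, or by iterating the full-twist formula). The main obstacle I anticipate is making precise the claim that the extremal column is "insensitive to smoothings": one has to verify that every term produced by resolving a crossing either reproduces the same extremal column with the same coefficient or lands strictly inside the Newton polygon, and this bookkeeping — essentially a careful reading of the proof of \eqref{lower} and \eqref{upper} to extract not just the degree bound but the identity of the extremal coefficient — is where the real work lies; it is exactly the kind of argument that "could have been written twenty years ago" using skein theory, which matches the tone of the introduction.
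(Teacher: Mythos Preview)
Your reduction to the framed statement and your base case $\beta=1_n$ (via Proposition~\ref{murakami}) are both correct and match the paper. The inductive step, however, has a genuine gap. First, a local error: smoothing a crossing of $\beta$ produces another $n$-strand braid, so $\beta_+,\beta_-,\beta_0$ all share the same framed MFW bounds and \emph{all three} contribute to the extremal column; your claim that ``$H_0$ has strictly lower $v$-breadth contribution (it changes the strand number effectively)'' is simply false. This by itself is not fatal, because the skein relation is linear and holds simultaneously on the $\beta$ side and on the $\beta\Delta^2$ side (since $\Delta^2$ is central), so ``two out of three implies the third'' is still a valid inductive principle.

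The real problem is termination. Skein relations, conjugation, and braid relations alone only reduce an arbitrary closed $n$-braid to a linear combination over a basis of the $n$-strand annular skein module, and that basis contains many elements besides $\widehat{1_n}$; to reach trivial braids in a computation tree one must use positive Markov destabilization $\alpha\sigma_i\in B_{i+1}\mapsto\alpha\in B_i$. But this move is \emph{incompatible} with appending $\Delta_n^2$: once $\Delta_n^2$ is in the word, all $n$ strands are used and no destabilization is possible. This is the actual obstacle, and your proposal does not identify it (you locate the difficulty in the smoothed term, which is the wrong place). The paper resolves it by writing $\Delta^2=\sigma_i\delta\sigma_i$, so that $\alpha\sigma_i\Delta^2=\alpha\sigma_i^2\delta\sigma_i$, and replacing the Markov move by a Conway split at $\sigma_i^2$: one branch is $\alpha\Delta^2$ as desired, while the side branch $\alpha\delta\sigma_i$ is shown (Lemma~\ref{lem:kiteker}) to close up to an $(n{-}1)$-strand braid closure, hence by \eqref{ineq:framed} it contributes nothing to the $v^{n-1}$ column (Lemma~\ref{lem:newbranch}). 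That small geometric observation --- that $\Delta_n^2$ with one crossing removed unwinds enough to lose a strand upon closure --- is the missing idea in your outline.
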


From this latter statement, Theorem \ref{thm:jobbbal} follows immediately. 
Indeed, comparing Theorem \ref{thm:preciz} with \eqref{lower}, \eqref{upper}, and \eqref{newupper}, we see that \eqref{eq:claim} either says that $0=0$, or the more meaningful formula \eqref{eq:jobbbal}, depending on whether the sharpness condition is met.

We give yet another formulation of our result which fits best with the method of our proof. It is important to stress that whenever the framed Homfly polynomial of a closed braid is taken, $\widehat\beta$ means the diagram that is the union of a
diagram of $\beta$ and $n$ disjoint \emph{simple} curves \emph{above} it to form its closure.

\begin{all}\label{pro:framed}
For any braid $\beta$ of $n$ strands and exponent sum $w$,
\begin{multline}\label{eq:framed}
\text{the coefficient of }v^{-n+1}\text{ in }H_{\widehat\beta}\\
=(-1)^{n-1}\text{ the coefficient of }v^{n-1}\text{ in }H_{\widehat{\beta\Delta^2}}.
\end{multline}
\end{all}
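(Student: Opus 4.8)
The plan is to work entirely with the framed Homfly polynomial $H$ and reduce everything to understanding the ``extreme'' $v$--columns of closed braid diagrams via skein theory in the Hecke algebra picture. Recall that in the Homfly skein of the annulus (or equivalently, working modulo the crossing-reducing skein relations inside a disc with $n$ inputs and $n$ outputs), a braid diagram $\beta$ on $n$ strands represents an element of the Hecke algebra $H_n$, and closing it up gives a trace. The key structural fact I would invoke is that the lowest and highest $v$--degree parts of $H_{\widehat\beta}$ are governed by the two one-dimensional ``extreme'' idempotents (quasi-idempotents) of $H_n$: the symmetrizer-type element $f_n$ and the antisymmetrizer-type element $g_n$, on which a single positive generator $\sigma_i$ acts by a scalar. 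Concretely, I would first establish that the coefficient of $v^{-n+1}$ in $H_{\widehat\beta}$ is obtained by expanding $\beta$ in a suitable basis of $H_n$ and reading off the contribution of the component that behaves like the full antisymmetrizer, since that is the only part of $\beta$ whose closure can carry the minimal power $v^{-n+1}$ (every strand must be ``merged'' maximally, each merge costing one factor of $v^{-1}$ against the $n$ strands of the closure).

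Second, I would analyze what multiplication by $\Delta^2$ does at the level of these extreme pieces. The full twist $\Delta^2$ is central in $B_n$, hence acts as a scalar on each of the one-dimensional extreme idempotents: on the antisymmetrizer it acts by some power of $v$ (times a sign), and likewise on the symmetrizer, but by a different power. The crucial computation is to pin down these two eigenvalues. Since $\Delta^2$ has $n(n-1)$ positive crossings, and since on the relevant one-dimensional summand each crossing contributes a definite scalar (of the form $v$ up to lower-order skein corrections that vanish on the idempotent), I expect the eigenvalue of $\Delta^2$ on the minimal-degree summand to be exactly $v^{n(n-1)}$ shifted appropriately, so that the coefficient of $v^{-n+1}$ in $H_{\widehat\beta}$ gets carried to the coefficient of $v^{-n+1+n(n-1)} = v^{n^2-n-1}$... wait, but the target exponent is $v^{n-1}$, so in fact I would need the eigenvalue to be $v^{2n-2}$ on the antisymmetrizer summand — and here is where Proposition~\ref{murakami} (the ``well known fact'' the introduction promises a short skein proof of) must enter: it presumably computes precisely the scalar by which $\Delta^2$ acts on the extreme idempotent, and it is this input I would quote. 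The sign $(-1)^{n-1}$ will arise because multiplication by a single positive crossing acts on the \emph{anti}symmetrizer with an extra sign relative to the symmetrizer (the antisymmetrizer eigenvalue of $\sigma_i$ being $-v^{?}$ after normalization), and accumulating $n(n-1)$ of them — or rather, more carefully, tracking which extreme idempotent is ``minimal for $\widehat\beta$'' versus ``maximal for $\widehat{\beta\Delta^2}$'' — produces exactly the parity $n-1$.

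Third, once the eigenvalue and sign are known, I would match columns directly: the coefficient of $v^{-n+1}$ in $H_{\widehat\beta}$ equals the coefficient of the appropriately shifted power in $H_{\widehat{\beta\Delta^2}}$, and one checks that shifted power is $v^{n-1}$ precisely because $\Delta^2$ converts the ``bottom'' extreme idempotent into the ``top'' one — the full twist interchanges the roles of the two MFW bounds. I would also need the companion statement that the coefficient of $v^{n-1}$ in $H_{\widehat{\beta\Delta^2}}$ receives \emph{no} contribution from any non-extreme part of $\beta$, i.e.\ that adding a full positive twist cannot push a sub-maximal term up to the maximal column; this follows because $\Delta^2$ raises every $v$--degree by at most $2n-2$ on each isotypic summand, with equality only on the extreme one (this monotonicity is essentially the content of the upper MFW estimate applied to $\beta\Delta^2$). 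Finally, translating from $H$ back to $P$ via $P = v^w H$ and using \eqref{newupper} converts Proposition~\ref{pro:framed} into Theorem~\ref{thm:preciz}, and Theorem~\ref{thm:jobbbal} follows as already explained in the text.

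The main obstacle I anticipate is the bookkeeping in the eigenvalue computation: correctly identifying \emph{which} of the two extreme idempotents controls the minimal column (as opposed to the maximal column), getting the power of $v$ by which $\Delta^2$ scales it exactly right (off-by-one errors in $n$ versus $n-1$ are easy here, and the statement's exponents $-n+1$ and $n-1$ differ by exactly $2n-2 = $ the number of crossings in $\Delta^2$ minus $(n-1)(n-2)$, which needs a clean explanation), and making sure the skein ``corrections'' to the crossing-scalar genuinely die on the idempotent rather than merely being lower order. This is exactly why the proof leans on Proposition~\ref{murakami} as a black box for the full-twist eigenvalue; with that in hand the rest is degree estimates of MFW type, which are standard.
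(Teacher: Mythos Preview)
Your approach is genuinely different from the paper's. The paper explicitly \emph{avoids} the Hecke algebra (see the opening paragraph of Section~\ref{sec:proof}) and instead builds a computation tree $\Gamma$ for $\beta$, then a parallel tree $\widetilde\Gamma$ for $\beta\Delta^2$ in which each Markov destabilization of $\Gamma$ is replaced by a Conway split. The extra branches thus created are shown (Lemmas~\ref{lem:kiteker} and~\ref{lem:newbranch}) to close up as $(n-1)$--strand braids and hence, by the framed MFW bound, not to contribute to the coefficient of $v^{n-1}$. At the surviving terminal nodes, labeled $\Delta^2$ instead of $1_k$, Proposition~\ref{murakami} gives exactly the right monomial, and a direct bookkeeping of edge labels (Lemma~\ref{lem:oldnode}) matches the two columns term by term. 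This is entirely elementary skein theory with no idempotents in sight.

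Your Hecke--algebra plan is viable in principle, but as written it contains a genuine confusion that would derail the argument. The Hecke algebra $H_n$ (as the skein of a disc with $2n$ boundary points, modulo the crossing relation $H_+-H_-=zH_0$) lives over $\mathbf Z[z^{\pm1}]$ and does not see $v$ at all; $v$ enters only through the Markov trace. Consequently the central element $\Delta^2$ acts on each irreducible by a scalar in $z$ (equivalently in $q$ with $z=q-q^{-1}$), not by a power of $v$, and it certainly does not ``convert the bottom extreme idempotent into the top one'' --- being central it preserves every isotypic component. Your ``wait, but the target exponent is $v^{n-1}$'' is exactly the moment this misconception bites: the shift from $v^{-n+1}$ to $v^{n-1}$ is \emph{not} produced by a $v$--eigenvalue of $\Delta^2$. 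What is actually needed is an identity among the trace weights: writing $H_{\widehat\gamma}=\sum_\lambda W_\lambda(v,z)\,\chi_\lambda(\gamma)$, one must show that the coefficient of $v^{-n+1}$ in $W_\lambda$ equals $(-1)^{n-1}$ times the $\Delta^2$--eigenvalue $c_\lambda(z)$ times the coefficient of $v^{n-1}$ in $W_\lambda$, for every $\lambda\vdash n$ (or else that only one $\lambda$ contributes on each side and then match those). Neither of these is what you wrote, and Proposition~\ref{murakami} alone --- which concerns only $\beta=1_n$ --- does not supply it. So the missing idea is the correct structural statement about the weights $W_\lambda$, not an eigenvalue computation. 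Once you have that, the argument does go through; the paper's tree argument sidesteps the need for it entirely.
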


It is easy to see that \eqref{eq:claim} and \eqref{eq:framed} are equivalent. The left hand side of \eqref{eq:framed} agrees with the coefficient of $v^wv^{-n+1}$ in $v^wH_{\widehat\beta}=P_{\widehat\beta}$, which is the left hand side of \eqref{eq:claim}. Similarly, the right hand side of \eqref{eq:framed} equals (plus or minus) the coefficient of $v^{w+n(n-1)}v^{n-1}$ in $v^{w+n(n-1)}H_{\widehat{\beta\Delta^2}}=P_{\widehat{\beta\Delta^2}}$, i.e.\ the right hand sides agree, too.

By reading our formulas from right to left, we get versions of our claims for the case of a full negative twist. (Equally trivial proofs can be derived from the fact that the Homfly polynomials of an oriented link and its mirror image are related by the change of variable $v\mapsto-v^{-1}$.) In particular, corresponding to Theorem \ref{thm:jobbbal}, we have

\begin{kov}
For any braid $\beta$ on $n$ strands, the upper MFW estimate is sharp if and only if the lower MFW estimate is sharp for the braid $\beta\Delta^{-2}$. If this is the case, then 
the right column of $P_{\widehat\beta}$ coincides with $(-1)^{n-1}$ times the left column of $P_{\widehat{\beta\Delta^{-2}}}$.
\end{kov}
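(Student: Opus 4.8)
The plan is to work entirely with the framed Homfly polynomial $H$ and to read it skein-theoretically via the Hecke algebra / Homfly skein of the annulus. The key observation is that the two coefficients in question are the extreme $v$--coefficients of a closed braid, and these are precisely the terms that survive a suitable ``lowest order'' limit. Concretely, I would filter the skein of the braid by the number of crossings that get smoothed (rather than switched) when one applies the skein relation $H_{+}-H_{-}=zH_{0}$ repeatedly to reduce $\widehat\beta$ to a linear combination of closures of \emph{permutation} braids. Each such fully-smoothed diagram is a disjoint union of $c$ circles for some $c\le n$, contributing $v^{e}$ for some exponent $e$ determined by the writhe of the permutation braid used, and the lowest $v$--degree term comes from choosing, at every crossing, the resolution that makes all strands ``as negative as possible'': i.e.\ from the negative permutation braid representative $\Delta^{-1}$-type word. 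I expect the coefficient of $v^{-n+1}$ in $H_{\widehat\beta}$ to equal, up to sign, a count (with $z$--weights) of certain rooted forests or of the states in which the closure becomes a single circle times a fixed power of $v$ — in any case, something that depends on $\beta$ only through the closure diagram and not through the writhe once the shift by $v^{w}$ is removed.

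The heart of the argument is then a local move: multiplying $\beta$ by $\Delta^2$ replaces $\widehat\beta$ by $\widehat{\beta\Delta^2}$, and I want to show that inserting a full positive twist $\Delta^2$ exchanges the role of ``lowest $v$--degree'' and ``highest $v$--degree'' after the appropriate degree shift. The cleanest route is to use the well-known eigenvalue decomposition of the full twist in the Hecke algebra $H_n$: the full twist $\Delta^2$ acts on each irreducible summand (indexed by a partition $\lambda\vdash n$) by a scalar, and on the summand corresponding to the one-row partition (the ``symmetrizer'' direction, which is exactly the part detecting the extreme column) this scalar is a known monomial in $v$ and $z$, specifically carrying a $v$--degree shift of $n(n-1)$ together with the sign $(-1)^{n-1}$ coming from the interaction with the other partitions / from the antisymmetrizer. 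Projecting $\widehat\beta$ onto the relevant idempotent, reading off that the extreme coefficient of $H_{\widehat\beta}$ is the coefficient of this projection, and then noting that $\Delta^2$ multiplies that projection by the stated scalar, yields \eqref{eq:framed} directly. Alternatively — and this is probably the version I would actually write, to keep it ``twenty years old'' and elementary — I would avoid Hecke algebra representation theory and instead run an induction on the number of crossings of $\beta$ using only the skein relations, the two kink relations, and the known value of $H$ for closures of $\Delta^{\pm 1}$-powers, tracking the top and bottom $v$--columns simultaneously.

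The main obstacle, and the step I would spend the most care on, is controlling the \emph{sign} $(-1)^{n-1}$ and the exact degree shift $n(n-1)$ uniformly: it is easy to see that adding a full twist shifts writhe by $n(n-1)$ and hence shifts $P$ by that power of $v$, but showing that the \emph{extreme} coefficient on one side maps exactly to the extreme coefficient on the other side (with nothing from the ``interior'' columns leaking in, and with the correct sign) requires knowing that the full twist acts on the extreme column by a single monomial rather than a genuine polynomial in $z$. In the skein-theoretic induction this manifests as needing a clean vanishing lemma: when one resolves the $n(n-1)$ crossings of $\Delta^2$, every state except the all-one-way state either lowers the circle count or cancels in pairs, so that only a single monomial contribution to the extreme column survives. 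Establishing that vanishing lemma — essentially the statement that $\Delta^2$ is ``diagonal on the top/bottom column'' — is where the real work lies; once it is in hand, \eqref{eq:framed} follows by matching the surviving monomial on each side, and the equivalence with \eqref{eq:claim} is the bookkeeping already carried out in the paragraph following the statement. I would also invoke Proposition~\ref{murakami} (the value of the relevant Homfly coefficient for a specific model braid) to anchor the induction's base case and pin down the constant.
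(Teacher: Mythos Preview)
First, a framing point: the statement in question is the Corollary about $\beta\Delta^{-2}$, which the paper does not prove independently but obtains in one line from Theorem~\ref{thm:jobbbal} (equivalently Proposition~\ref{pro:framed}) by substituting $\beta\Delta^{-2}$ for $\beta$ and reading \eqref{eq:jobbbal} from right to left, or by the mirror substitution $v\mapsto -v^{-1}$. You have instead written a proposal for Proposition~\ref{pro:framed}; that is a legitimate reduction, but you never say the sentence that passes from \eqref{eq:framed} to the Corollary.

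Your proposed route to Proposition~\ref{pro:framed} is genuinely different from the paper's, and the paper in fact says explicitly that it avoids the Hecke algebra. The paper builds a computation tree $\Gamma$ for $\beta$ (Conway splits, conjugation, positive Markov destabilizations to trivial braids) and then a parallel tree $\widetilde\Gamma$ for $\beta\Delta^2$ by replaying the same moves with $\Delta^2$ appended. The only nontrivial step is Markov destabilization: in $\widetilde\Gamma$ it is replaced by a Conway split that borrows a generator from $\Delta^2$, producing an extra branch. Lemma~\ref{lem:kiteker} shows that the closure of this extra branch is already the closure of a braid on $n-1$ strands, so by \eqref{ineq:framed} it cannot contribute to the $v^{n-1}$ column (Lemma~\ref{lem:newbranch}); the surviving terminal nodes carry $\widehat{\Delta_n^2}$ instead of unlinks, and Proposition~\ref{murakami} supplies their extreme coefficient together with the sign $(-1)^{n-1}$. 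This \emph{is} the ``vanishing lemma'' you flag as the main obstacle, proved by a strand-count rather than by representation theory.

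Your own sketch has a real gap at exactly that point. You assert that the extreme $v$-column of $H_{\widehat\beta}$ is the projection of $\widehat\beta$ onto a single primitive idempotent (the one-row symmetrizer) and that $\Delta^2$ acts there by a monomial, with the sign $(-1)^{n-1}$ ``coming from the interaction with the other partitions / from the antisymmetrizer.'' Neither claim is argued, and the sign explanation is muddled: the eigenvalue of $\Delta^2$ on any single irreducible of $H_n$ is a signless monomial, so if the extreme column were captured by one idempotent there would be no source for $(-1)^{n-1}$. In reality the coefficient of $v^{\pm(n-1)}$ is a $z$-polynomial combination of traces over several $\lambda\vdash n$, and isolating it requires precisely the cancellation you defer. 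Your fallback ``elementary induction on the number of crossings'' is not yet a plan either: you give no inductive hypothesis, no account of how the three terms of the skein relation interact with the extreme column, and no mechanism to prevent interior columns from leaking in. The paper's computation-tree argument is exactly an organization of such an induction that makes the vanishing visible via Lemma~\ref{lem:kiteker}; absent an analogue of that lemma, you do not have a proof.
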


The two equivalent sharpness conditions of Theorem \ref{thm:jobbbal} both hold for positive braids \cite{fw}. 
Thus we have

\begin{kov}\label{cor:poz}
For any positive braid $\beta$ on $n$ strands, the left column of its Homfly polynomial agrees with $(-1)^{n-1}$ times the right column of the Homfly polynomial of $\beta\Delta^2$.
\end{kov}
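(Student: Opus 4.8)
The plan is to obtain the corollary as a special case of Theorem~\ref{thm:jobbbal}. The hypothesis of that theorem is only that the lower MFW estimate \eqref{lower} be sharp for $\beta$, i.e.\ that the lowest $v$--degree occurring in $P_{\widehat\beta}$ be exactly $w-n+1$, equivalently that the leftmost column of $P_{\widehat\beta}$ sit at $v=w-n+1$; and as recalled just above, this is precisely the sharpness condition that holds for every positive braid word by Franks--Williams \cite{fw}. Once that is granted, Theorem~\ref{thm:jobbbal} yields at once both that the upper MFW estimate is sharp for $\beta\Delta^2$ and that \eqref{eq:jobbbal} holds, and \eqref{eq:jobbbal} is verbatim the assertion of the corollary. (The sharpness of the upper estimate for $\beta\Delta^2$ is no new information, since $\beta\Delta^2$ is again a positive braid; what genuinely needs Theorem~\ref{thm:jobbbal}, or equivalently Theorem~\ref{thm:preciz}, is the identification of the two columns up to the sign $(-1)^{n-1}$.)

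Thus the only external input is the fact that a positive braid word attains its lower MFW bound, and for completeness I would recall the idea. The closure of a positive braid word on $n$ strands with $c=w$ crossings bounds its Bennequin surface --- $n$ disks joined by $c$ bands, Euler characteristic $n-c$ --- which by Stallings is a fiber. Hence, for $\widehat\beta$ non-split (the split case then following by multiplicativity of the Homfly polynomial under disjoint union), the Conway polynomial $P_{\widehat\beta}(1,z)$ of $\widehat\beta$ is monic of degree $c-n+1$ in $z$. On the other hand Morton's inequality bounds $\max\deg_z P_{\widehat\beta}$ by $c-n+1$, so in fact $\max\deg_z P_{\widehat\beta}=c-n+1$ and the $z^{c-n+1}$--coefficient of $P_{\widehat\beta}$ is nonzero; a short skein-theoretic computation then locates that extremal monomial in the leftmost column, $v=w-n+1$ (see \cite{fw}). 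So the $v^{w-n+1}$ column of $P_{\widehat\beta}$ is nonzero, which is exactly what sharpness means.

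Since Theorem~\ref{thm:jobbbal}, whose proof occupies Section~\ref{sec:proof}, carries all the weight, the corollary itself presents essentially no obstacle: it is the assembly of that theorem with a classical fact. If forced to name a delicate point, it is the non-cancellation in the previous paragraph --- once negative crossings are allowed, the leftmost column of the Homfly polynomial can slide strictly to the right of the lower MFW bound, so the positivity of $\beta$ is used there in an essential way. Everything else is formal.
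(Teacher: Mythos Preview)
Your proposal is correct and follows exactly the paper's own route: the corollary is deduced immediately from Theorem~\ref{thm:jobbbal} together with the Franks--Williams fact \cite{fw} that the lower MFW estimate is sharp for positive braids. The only difference is that you supply a sketch of why positive braids attain the lower MFW bound, whereas the paper simply cites \cite{fw}; this extra paragraph is accurate and does no harm.
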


Note however that the Morton--Franks-Williams inequalities are sharp for many non-braid-positive knots, too. Up to $10$ crossings, there are only five knots that do \emph{not} possess braid representations with a sharp (lower) MFW estimate \cite{jones}. Thus, \eqref{eq:claim} is informative for many non-positive braids as well (cf.\ Example \ref{ex:12n187}).

\begin{megj}
For all three of the inequalities, \eqref{ineq:mfw}, \eqref{lower}, and \eqref{upper}, that we quoted from \cite{fw} and \cite{morton}, it makes sense to say that they are \emph{sharp for a braid} (like we did throughout most of this introduction) and also to discuss (as in the previous paragraph) whether they are \emph{sharp for a link}. The latter means that a given oriented link type has a braid representative that turns the inequality into an equality.
\end{megj}

We are unaware of a version of the `twist phenomenon' for the Kauffman polynomial. Likewise, there do not seem to be noteworthy consequences of Theorem \ref{thm:preciz} for the various other knot polynomials that are derived from the Homfly polynomial. (Let us mention though Yokota's work \cite{yok} concerning the Jones polynomial here.) However we do anticipate there to be a twisting formula for the generalization (categorification) of the Homfly polynomial known as Khovanov--Rozansky homology \cite{kr1} (cf.\ Example \ref{ex:12n187}).

\section{Proof of the main theorem}\label{sec:proof}

We will give a detailed proof of Proposition \ref{pro:framed}. It has already been explained how our other claims follow from it. Despite the linear nature of the formulas \eqref{eq:claim} and \eqref{eq:framed}, we will be able to avoid using the Hecke algebra and give a proof based on skein theory.

\subsection{Review of known techniques}

First off, note that for the framed Homfly polynomial the MFW estimates take the form
\begin{equation}\label{ineq:framed}
-n+1\le\text{ any exponent of }v\text{ in }H_{\widehat\beta}(v,z)\le n-1.
\end{equation}

We will need the following fact about the link $\widehat{\Delta_n^2}$ which is formed by $n$ fibers of the Hopf fibration. It is a tiny portion of a known formula. 

\begin{all}\label{murakami}
The highest $v$-exponent in $H_{\widehat{\Delta_n^2}}(v,z)$ is $n-1$ and the single term in which it occurs is $(-1)^{n-1}z^{1-n}v^{n-1}$.
\end{all}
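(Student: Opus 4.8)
The plan is to compute $H_{\widehat{\Delta_n^2}}$ inductively, peeling off strands one at a time, and to track only what happens in the top $v$-column. The key structural observation is that $\Delta_n^2$, the full twist on $n$ strands, can be written so that one distinguished strand (say the top one) links each of the other $n-1$ strands exactly twice; removing this strand by a skein/Markov argument should relate $\widehat{\Delta_n^2}$ to $\widehat{\Delta_{n-1}^2}$ together with some ``correction'' diagrams that arise from smoothing crossings. First I would set up the induction: the base case $n=1$ gives the unknot with $H=1$, matching $(-1)^0 z^0 v^0$, and $n=2$ gives the positive Hopf link, whose framed Homfly polynomial one computes directly from the skein relations to be $v^2(z^{-1}v - z^{-1}v^{-1} + \ldots)$ — the point being that its top column is exactly $-z^{-1}v$, i.e.\ $(-1)^1 z^{1-2} v^{2-1}$.

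For the inductive step, I would isolate the top strand of $\Delta_n^2$ where it crosses over-and-back the $(n-1)$st strand (the rightmost pair of the two bands of crossings that strand makes). Applying the skein relation $H_{+} = H_{-} + z H_0$ at one of these crossings, the $H_{-}$ term cancels one positive crossing against the adjacent negative-... wait, all crossings are positive; instead I would use the relation to trade a crossing for a smoothing plus a diagram with two fewer crossings, and iterate. The cleaner route: use the fact that sliding the top strand around via a sequence of skein moves expresses $H_{\widehat{\Delta_n^2}}$ as a $\Z[z^{\pm1}]$-combination of $H_{\widehat{\Delta_{n-1}^2}}$ (with the top strand pulled off as a disjoint framed unknot, contributing a factor from the framing/stabilization) and of Homfly polynomials of links obtained by smoothing, each of which is a closed braid on fewer strands hence — by the framed MFW bound \eqref{ineq:framed} — has $v$-degree strictly less than $n-1$. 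Tracking the highest-degree term through this recursion: the disjoint-unknot term carries a factor whose top $v$-contribution is (up to the $z^{-1}$ coming from the $\delta = (v-v^{-1})/z$ value of an unknotted circle in the framed skein module) exactly $-z^{-1}v^2$ times the top term of $H_{\widehat{\Delta_{n-1}^2}}$; by induction this is $-z^{-1}v^2 \cdot (-1)^{n-2} z^{2-n} v^{n-2} = (-1)^{n-1} z^{1-n} v^n$, and one more bookkeeping step (accounting for the two crossings of the top strand and the closure arc) corrects the $v^n$ down to $v^{n-1}$, giving $(-1)^{n-1} z^{1-n} v^{n-1}$. Crucially, the contribution to the $v^{n-1}$ column can only come from this leading term of the unknot-split piece, since every smoothing term lives in strictly lower $v$-degree.

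The main obstacle will be making the ``peel off the top strand'' step precise and verifying that the split-off unknot genuinely contributes the clean factor I claimed, rather than something with extra terms in the top column. The honest way to handle this is to work in the framed Homfly skein of the annulus (or of the solid torus), where the full twist acts as a known operator; the eigenvalue/leading-order behavior of that operator on the relevant basis element is what pins down the $v^{n-1}$ coefficient. Concretely, one shows that acting by $\Delta_n^2$ raises the top $v$-degree by exactly $2$ and multiplies the leading coefficient by $-z^{-1}v$ relative to $\Delta_{n-1}^2$ after the Markov destabilization that removes the last strand — this is where care is needed, because Markov destabilization changes $n$ and hence shifts the MFW window, so one must confirm that no lower-order terms of the $(n-1)$-strand polynomial get promoted into the new top column. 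Once this single leading-order computation is done, uniqueness of the top term follows for free from \eqref{ineq:framed} applied to all the auxiliary diagrams, and the proposition is proved.
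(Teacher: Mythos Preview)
Your inductive skein approach is essentially the first of the two alternative proofs the paper mentions but does \emph{not} carry out: the paper notes that Proposition~\ref{murakami} ``can be proven by induction based on the formula $\Delta_n^2=(\sigma_{n-1}\ldots\sigma_1)(\sigma_1\ldots\sigma_{n-1})\Delta_{n-1}^2$, skein relations, inequality \eqref{upper}, and repeated use of arguments similar to Lemma~\ref{lem:kiteker}.'' That is exactly your plan, and the acknowledged gap in your write-up --- showing that each smoothing term is the closure of a braid on at most $n-1$ strands so that \eqref{ineq:framed} kills its $v^{n-1}$ contribution --- is precisely what a Lemma~\ref{lem:kiteker}--style argument supplies. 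Your ``one more bookkeeping step'' correcting $v^n$ to $v^{n-1}$ is where the Markov destabilization (edge label $v^{-1}$ in this paper's convention) enters, and once you use the explicit factorization above rather than the vaguer ``top strand links each other strand twice,'' that step becomes routine.

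The paper's own proof, by contrast, avoids induction and skein manipulation entirely. It passes to the mirror $\widehat{\Delta_n^{-2}}$, exhibits an explicit Legendrian front for it, observes that its unique oriented ruling has $j=1-n$, and invokes Rutherford's theorem identifying the oriented ruling polynomial with the coefficient of $v^{1-n}$ in the framed Homfly; the Homfly bound on the Thurston--Bennequin number certifies that $1-n$ is the extremal exponent. Mirroring ($v\mapsto -v^{-1}$) then gives the statement for $\widehat{\Delta_n^2}$. This is shorter and requires no recursion, at the cost of importing contact-geometric machinery; your approach stays within skein theory but demands more bookkeeping.
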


\begin{proof}
This statement can be proven by induction based on the formula $\Delta_n^2=(\sigma_{n-1}\ldots\sigma_1)(\sigma_1\ldots\sigma_{n-1})\Delta_{n-1}^2$, skein relations, inequality \eqref{upper}, and repeated use of arguments similar to Lemma \ref{lem:kiteker}. It can also be inferred from \cite[section 9]{jones}. We will present a third proof, based on a different set of notions. All relevant definitions can be found in \cite{dan}.

\begin{figure}
   \centering
   \includegraphics[width=.2\linewidth]{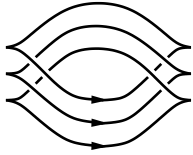}
   \caption{A Legendrian representative of the link $\widehat{\Delta^{-2}}$}
   \label{fig:delta-2}
\end{figure}

Figure \ref{fig:delta-2} shows the front diagram of a Legendrian representative of the link $\widehat{\Delta_n^{-2}}$. The number of its left cusps is $c=n$ and it has Thurston--Bennequin number $-n^2$. As all crossings are negative, the only oriented ruling of this diagram is the one without switches. Rutherford associates to it the number $j=\text{number of switches}-c+1=-n+1$.

Now \cite[Theorem 4.3]{dan} says that the so-called oriented ruling polynomial, in this case $\sum z^j=z^{-n+1}$, agrees with the coefficient of $a^{c-1}=v^{1-c}=v^{1-n}$ in the framed Homfly polynomial of the diagram obtained by smoothing the cusps. Furthermore, the Homfly estimate on the Thurston--Bennequin number \cite{ft} implies that this ($1-n$) is the lowest exponent of $v$ that occurs in $H_{\widehat{\Delta^{-2}}}(v,z)$. Finally, to obtain information on $\widehat{\Delta^2}$, we substitute $-v^{-1}$ for $v$. We find that the highest exponent of $v$ in $H_{\widehat{\Delta^2}}(v,z)$ is indeed $n-1$ and that it occurs with the coefficient $(-1)^{1-n}z^{-n+1}$.
\end{proof}

Note that since the (framed) Homfly polynomial of the $n$-component unlink is $(\frac{v^{-1}-v}z)^{n-1}$, which has left column (coefficient of $v^{1-n}$) $z^{1-n}$, Proposition \ref{murakami} already confirms Proposition \ref{pro:framed} for the special case $\beta=1_n$.

Next, recall that for any braid, a \emph{computation tree} can be built  
using the four types of moves discussed below \cite{fw}. 
Each vertex of the tree carries a \emph{label}, which either means a braid (word) or some polynomial associated to the (closure of) the braid. At first, labels will be interpreted as braids. The moves are as follows.

\begin{itemize}
\item Isotopy (braid group relations)
\item Conjugation: $\beta_1\beta_2\mapsto\beta_2\beta_1$
\item Positive Markov destabilization: $\alpha\sigma_i\in B_{i+1}$ becomes $\alpha\in B_i$
\item Two types of Conway splits, as shown in Figure \ref{fig:conway}.
\begin{figure}[h]
\labellist
\small
\pinlabel $1$ at 135 165
\pinlabel $z$ at 660 160
\pinlabel $-z$ at 135 50
\pinlabel $1$ at 660 45
\endlabellist
   \centering
   \includegraphics[width=.7\linewidth]{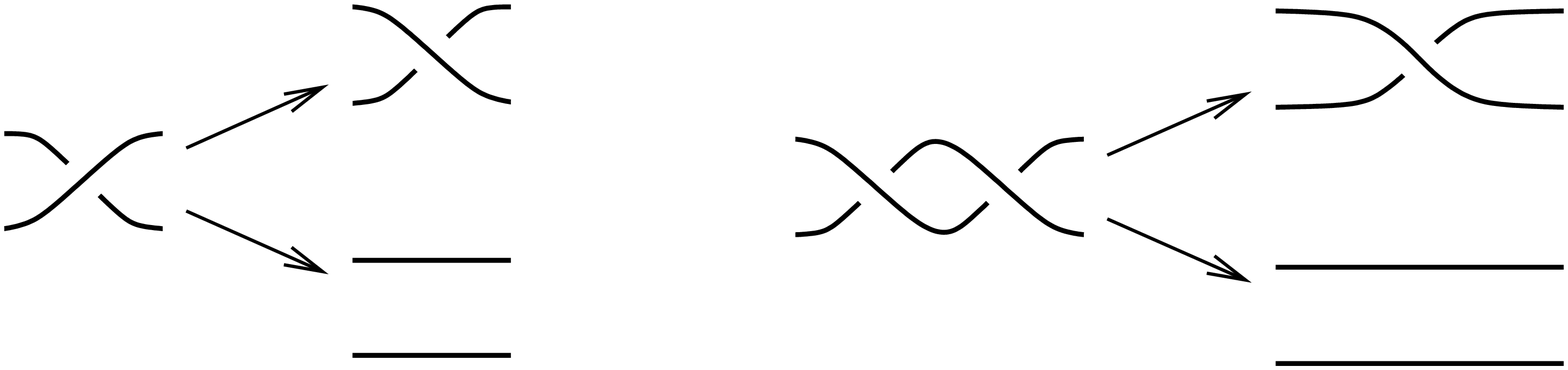}
   \caption{Conway splits}
   \label{fig:conway}
\end{figure}
\end{itemize}

The tree is rooted and oriented. The \emph{terminal nodes} of the tree are labeled with unlinks on various numbers of strands. To be precise, we should note that isotopy and conjugation take place `within the vertices' of the tree; that is to say, by labels we really mean certain equivalence classes of braids. This does not lead to ambiguity because these two operations on braids correspond to regular isotopies of the closures. A Markov destabilization also has a very controlled effect, but for us it will be convenient to treat it already as an edge of the tree (the tail of such an edge is a $2$-valent vertex). Clearly, Conway splits are mostly responsible for the structure of the graph.  

A computation tree can be used to evaluate various knot polynomials. In particular, to obtain the \emph{framed} Homfly polynomial $H$ of the closure of the (label of the) root, we proceed as follows. Label edges resulting from a Conway split as already shown in Figure \ref{fig:conway}. Any edge representing a (positive) Markov destabilization carries the label $v^{-1}$ (see Figure \ref{fig:markov}).  
If a terminal node $x$ is labeled with $1_k$, then also label it with the polynomial $(\frac{v^{-1}-v}z)^{k-1}$. Then, this terminal node contributes
\begin{equation}\label{eq:contrib}
\left(\frac{v^{-1}-v}z\right)^{k-1} h(x),
\end{equation}
where $h(x)$ is the product of the edge labels that appear along the path that connects $x$ to the root. Finally, the framed Homfly polynomial of the root is the sum of these contributions.

It is not essential to insist on trivial braids as terminal nodes. The point is that we must know what the framed Homfly polynomials of the labels of the terminal nodes are. Also, note that the subtree generated by any vertex of a computation tree is a computation tree for that vertex.

\subsection{Proof of Proposition \ref{pro:framed}}

Starting from a computation tree $\Gamma$ for the braid $\beta$, we build a computation tree $\widetilde\Gamma$ for $\beta\Delta^2$. Of course, we will try to imitate $\Gamma$ as much as possible. In particular, we will attempt to delay altering the full twist in the braid word until after the crossings of $\beta$ are all removed. (To see to what extent this is possible, we will have to analyze the four basic moves.) Doing so, we will be able to realize $\Gamma$ as a subtree of $\widetilde\Gamma$ and to read off our result.

An important difference between $\Gamma$ and $\widetilde\Gamma$ is that whenever a Markov destabilization in $\Gamma$ reduces the number of strands, 
the corresponding vertices of $\widetilde\Gamma$ will still be labeled with braids on $n$ strands. (This is simply because as long as $\Delta_n^2$ is in the braid word, we need $n$ strands.) We may think that the reduced strands live on as trivial ``ghost strands'' shown as dotted lines in Figure \ref{fig:markov}.

Two of the four basic steps, namely isotopy and Conway splitting, are completely local and thus they can be carried out unchanged when $\Delta^2$ is attached to the end of $\beta$. Conjugation is easy too, since $\Delta^2$ belongs to the center of $B_n$. Thus, the conjugation move
\[\beta_1\beta_2\mapsto\beta_2\beta_1\quad\text{in }\Gamma\]
can be replaced by an isotopy followed by a conjugation
\[\beta_1\beta_2\Delta^2\mapsto\beta_1\Delta^2\beta_2\mapsto\beta_2\beta_1\Delta^2\quad\text{in }\widetilde\Gamma.\]

Markov destabilization however requires more work. If a step $\alpha\sigma_i\mapsto\alpha$, as shown in Figure \ref{fig:markov}, is part of the computation tree $\Gamma$, then we will realize it in $\widetilde\Gamma$ as part of a Conway split; see Figure \ref{fig:kiteker}. Of course this gives rise to a new, `unnecessary' subtree of $\widetilde\Gamma$ that does not have a counterpart in $\Gamma$.

\begin{figure}
\labellist
\small
\pinlabel $\alpha$ at 75 110
\pinlabel $\alpha$ at 420 110
\pinlabel $v^{-1}$ at 280 115
\endlabellist
   \centering
   \includegraphics[width=.3\linewidth]{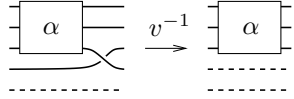}
   \caption{Positive Markov destabilization changes the framed Homfly polynomial $H$ by a factor of $v$.}
   \label{fig:markov}
\end{figure}

\begin{figure}
\labellist
\small
\pinlabel $\alpha$ at 75 495
\pinlabel $\alpha$ at 525 495
\pinlabel $\alpha$ at 1375 495
\pinlabel $\alpha$ at 145 135
\pinlabel $\alpha$ at 960 135
\pinlabel $z$ at 1225 490
\pinlabel $\Delta^2$ at 265 465
\pinlabel $\Delta^2$ at 1530 465
\pinlabel $1$ at 550 325
\pinlabel $v^{-1}$ at 820 140
\endlabellist
   \centering
   \includegraphics[width=\linewidth]{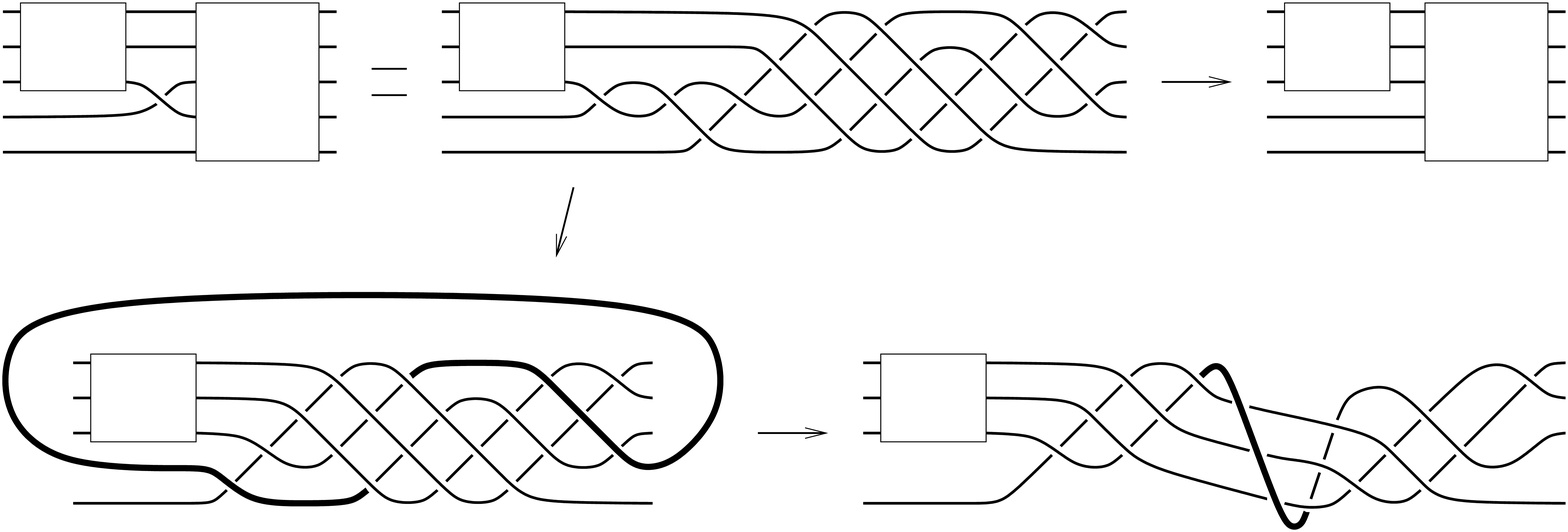}
   \caption{The Conway split (and consequent destabilization) in $\widetilde\Gamma$ that replaces the Markov destabilization in $\Gamma$}
   \label{fig:kiteker}
\end{figure}

A precise description is as follows. $\Delta$ can be represented by positive braid words that either start or end with arbitrarily chosen braid group generators \cite[Lemma 2.4.1]{birman}. Thus we may write $\Delta^2=\sigma_i\delta\sigma_i$ and hence $\alpha\sigma_i\Delta^2=\alpha\sigma_i^2\delta\sigma_i$. We perform the Conway split at $\sigma_i^2$. One of the two resulting braids is $\alpha\sigma_i\delta\sigma_i=\alpha\Delta^2$, as desired. Note that contrary to $\Gamma$, in $\widetilde\Gamma$ the number of strands did not decrease.

The other edge that results from the Conway split points to the `side-product' word $\alpha\delta\sigma_i$. 

\begin{lemma}\label{lem:kiteker}
The closure of the braid $\alpha\delta\sigma_i$ is also the closure of a braid on $n-1$ strands. 
\end{lemma}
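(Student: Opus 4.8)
The plan is to unwind the definitions and recognize the braid $\alpha\delta\sigma_i$ as a Markov (de)stabilization of a braid on $n-1$ strands. Recall from the setup that we wrote $\Delta^2 = \sigma_i\delta\sigma_i$, where $\delta$ is a positive braid word on $n$ strands; the side-product word is $\alpha\delta\sigma_i$, and we want to see its closure as the closure of something on $n-1$ strands. The natural first move is to use conjugation (which does not change the closure): $\widehat{\alpha\delta\sigma_i} = \widehat{\sigma_i\alpha\delta}$. Since $\alpha \in B_{i+1}\subseteq B_n$ does not involve the strand past position $i+1$, and we are free to choose how $\Delta^2$ is expressed, the key is to arrange that $\sigma_i$ appears at the end (or can be brought to the end) of the word $\sigma_i\alpha\delta$ as the \emph{unique} occurrence of a generator touching the $n$-th strand, so that a Markov destabilization removes that last strand.

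Concretely, I would exploit the identity $\Delta_n^2 = (\sigma_{n-1}\cdots\sigma_1)(\sigma_1\cdots\sigma_{n-1})\Delta_{n-1}^2$ quoted earlier in the paper, or more precisely a rewriting of $\Delta_n^2$ in which the dependence on the top strand is isolated. Writing $\Delta_n^2 = \sigma_i\delta\sigma_i$ with $i$ the index of the destabilization in $\Gamma$ — here $i = n-1$, since a positive Markov destabilization removes the top strand — we have $\delta = \sigma_i^{-1}\Delta_n^2\sigma_i^{-1}$. The claim is then that $\alpha\delta\sigma_{n-1}$, after conjugation, becomes a word of the form $\gamma\sigma_{n-1}$ where $\gamma \in B_{n-1}$ and $\sigma_{n-1}$ occurs exactly once, making it Markov-destabilizable. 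I would verify this by tracking the $n$-th strand through the word $\Delta_n^2$ in the chosen representative: in $\Delta_n^2 = (\sigma_{n-1}\cdots\sigma_1)(\sigma_1\cdots\sigma_{n-1})\Delta_{n-1}^2$ the top strand is involved only in the first and last letters $\sigma_{n-1}$, so $\delta = (\sigma_{n-2}\cdots\sigma_1)(\sigma_1\cdots\sigma_{n-1})\Delta_{n-1}^2\sigma_{n-1}^{-1}$ still has one $\sigma_{n-1}^{\pm 1}$, but combined with the trailing $\sigma_{n-1}$ from $\alpha\delta\sigma_{n-1}$ and the freedom to conjugate, the $n$-th strand can be arranged to cross only once positively, whence destabilization applies.

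The main obstacle I expect is bookkeeping: showing that after the necessary conjugations and braid-relation manipulations, the top strand genuinely participates in exactly one positive crossing, so that a \emph{positive} Markov destabilization (the only kind in our toolkit) is legitimate. One must be careful that pushing letters around with braid relations does not reintroduce a second interaction with the top strand, and that the sign of the surviving crossing is correct. A clean way to sidestep the index juggling is to appeal directly to the geometric picture in Figure \ref{fig:kiteker}: the side-product diagram visibly has a strand that runs across the closure meeting the rest in a single positive crossing, which is exactly a stabilized diagram. I would present the argument at that level of detail — exhibit the one crossing, invoke destabilization — rather than grinding through an explicit word rewriting, since the geometry makes the single-crossing claim transparent and the rest is the standard Markov move.
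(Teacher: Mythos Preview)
Your proposal has a genuine gap: the assumption $i=n-1$ is unjustified. In the computation tree $\Gamma$, destabilizations happen repeatedly; after the first one the strand count has already dropped below $n$, and in $\widetilde\Gamma$ those earlier reductions survive only as ``ghost strands'' while $\Delta^2$ remains the full twist $\Delta_n^2$ on all $n$ strands. Thus at a generic step one has $\alpha\in B_i$ with $i<n-1$ and $n-i-1$ nontrivially--twisted ghost strands below (the paper's own figure has $n=5$, $i=3$). Your rewriting $\Delta_n^2=(\sigma_{n-1}\cdots\sigma_1)(\sigma_1\cdots\sigma_{n-1})\Delta_{n-1}^2$ isolates strand $n$, not strand $i+1$, so it does not touch the general case; and your fallback (``the side-product diagram visibly has a strand meeting the rest in a single positive crossing'') is precisely the nontrivial claim that needs proof, since in $\alpha\delta\sigma_i=\alpha\sigma_i^{-1}\Delta_n^2$ strand $i+1$ still crosses every other strand twice inside $\Delta_n^2$. (Minor: $\alpha\in B_i$, not $B_{i+1}$.)

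The paper's argument supplies exactly the missing idea, and it is different in kind from a word-level chase. One observes that $\delta\sigma_i$ is a \emph{cabling} of $\Delta_3^2$ by three bundles of $i-1$, $2$, and $n-i-1$ strands respectively (internally the first and third bundles carry full twists, the middle pair carries a single $\sigma_i$). Writing $\Delta_3^2=\sigma_2\sigma_1\sigma_2\sigma_1\sigma_2\sigma_1$, the middle cable passes \emph{under} only at two adjacent crossings; hence a long arc of the closure---carrying both middle strands together with $\alpha$---lies entirely on top and can be pulled tight across the diagram. After this isotopy strand $i+1$, which $\alpha\in B_i$ never touches, meets the rest in a single positive crossing and a positive Markov destabilization applies. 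The cabling-of-$\Delta_3^2$ picture is what lets you disentangle strand $i+1$ from $\Delta_n^2$ for arbitrary $i$; without it the single-crossing claim is not visible.
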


\begin{proof}
We explain the move pictured at the bottom of Figure \ref{fig:kiteker}: the thick part of the left hand side diagram is pulled tight to appear as on the right. The braid $\delta\sigma_i$ is almost a full twist. Its strands can be bundled together into three groups: (i) the $i-1$ strands whose endpoints are on the top; (ii) the $2$ strands right below them; (iii) the remaining $n-i-1$ strands. In other words, we think of $\delta\sigma_i$ as the union of three smaller braids. (In Figure \ref{fig:kiteker}, $n=5$ and $i=3$, hence we are talking about $2$, $2$, and $1$ strands, respectively.) One by one, these are isotopic to (i) a full twist on $i-1$ strands; (ii) a single crossing on two strands, represented by $\sigma_i$ in the word; (iii) a full twist on $n-i-1$ strands. These three smaller braids in turn are braided together to form $\delta\sigma_i$ in the fashion of $\Delta_3^2$. 

This time, we consider the pattern $\Delta_3^2$ as the braid word $\sigma_2\sigma_1\sigma_2\sigma_1\sigma_2\sigma_1=\,\includegraphics[width=.1\linewidth,viewport=2 1 25 10]{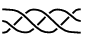}$. Note that its middle strand crosses under only twice, at the second and third crossings. Thus the part of the diagram that is thickened in the lower left of Figure \ref{fig:kiteker} contains no undercrossings. Hence it can be lifted and rearranged as shown in the lower right.
\end{proof}

\begin{lemma}\label{lem:newbranch}
Any ``extra branch'' of $\widetilde\Gamma$ created in our process (starting as shown in the lower part of Figure \ref{fig:kiteker}) does not contribute to the coefficient of $v^{n-1}$ in $H_{\widehat{\beta\Delta^2}}$.
\end{lemma}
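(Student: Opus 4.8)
The plan is to isolate the contribution of one extra branch to $H_{\widehat{\beta\Delta^2}}$ and show that every monomial in it has $v$-exponent strictly less than $n-1$. Fix such a branch: it is the subtree $\widetilde\Gamma_{x_0}$ rooted at the vertex $x_0$ carrying the side-product word $\alpha\delta\sigma_i$ (the second output of the Conway split in Figure \ref{fig:kiteker}), and by construction its first edge realizes the move drawn in the lower part of that figure. Since the subtree generated by any vertex is itself a computation tree for that vertex, $\widetilde\Gamma_{x_0}$ is a computation tree for $\widehat{\alpha\delta\sigma_i}$; and since for a terminal node $x$ of $\widetilde\Gamma_{x_0}$ the path from $x$ to the root of $\widetilde\Gamma$ decomposes as the path from $x$ to $x_0$ followed by the path from $x_0$ to the root, the corresponding edge-label products factor as well. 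Summing \eqref{eq:contrib} over the terminal nodes of $\widetilde\Gamma_{x_0}$, the total contribution of the branch to $H_{\widehat{\beta\Delta^2}}$ equals
\[
h(x_0)\cdot H_{\widehat{\alpha\delta\sigma_i}},
\]
where $h(x_0)$ is the product of the edge labels along the path joining $x_0$ to the root of $\widetilde\Gamma$.

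I would then estimate the $v$-degree of each of the two factors. Every edge label in $\widetilde\Gamma$ is one of $1$, $z$, $-z$ (from Conway splits) or $v^{-1}$ (from Markov destabilizations), so $h(x_0)$ is a monomial $\pm z^{a}v^{-b}$ with $a,b\ge 0$; in particular its $v$-degree is at most $0$. For the other factor I would use Lemma \ref{lem:kiteker}, or rather the explicit chain of moves in its proof: a regular isotopy, which leaves $H$ unchanged, followed by a single positive Markov destabilization, under which $H_{\widehat{\alpha\delta\sigma_i}}=v^{-1}H_{\widehat\gamma}$ for some braid $\gamma$ on $n-1$ strands. The MFW estimate \eqref{ineq:framed} for $\gamma$ gives that every $v$-exponent in $H_{\widehat\gamma}$ is at most $n-2$, hence every $v$-exponent in $H_{\widehat{\alpha\delta\sigma_i}}$ is at most $n-3$.

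Putting the two estimates together, every monomial of $h(x_0)\cdot H_{\widehat{\alpha\delta\sigma_i}}$ has $v$-exponent at most $n-3<n-1$, so this branch contributes nothing to the coefficient of $v^{n-1}$ in $H_{\widehat{\beta\Delta^2}}$; and the argument applies verbatim to every extra branch, including nested ones, since the displayed factorization is insensitive to the internal structure of $\widetilde\Gamma_{x_0}$. I expect the one genuinely delicate point to be the bound on $H_{\widehat{\alpha\delta\sigma_i}}$: feeding the $n$-strand closure diagram of $\alpha\delta\sigma_i$ directly into \eqref{ineq:framed} only yields $n-1$ as an upper bound for its $v$-exponents, which is not enough. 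One must use the explicit reduction to the $(n-1)$-strand braid $\gamma$ from the proof of Lemma \ref{lem:kiteker}, under which $H_{\widehat{\alpha\delta\sigma_i}}=v^{-1}H_{\widehat\gamma}$, so that the bound for $H_{\widehat\gamma}$ coming from \eqref{ineq:framed} transfers, with room to spare, to $H_{\widehat{\alpha\delta\sigma_i}}$.
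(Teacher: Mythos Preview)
Your argument is correct and follows essentially the same route as the paper: reduce $\alpha\delta\sigma_i$ via Lemma~\ref{lem:kiteker} to a braid on $n-1$ strands, apply the framed MFW bound \eqref{ineq:framed} there, and observe that the edge labels back to the root can only lower the $v$--exponent. The only cosmetic difference is that the paper places the reduced braid $\eta$ itself as a node of $\widetilde\Gamma$ and bounds from there (obtaining $v$--exponent at most $n-2$), whereas you stop at $x_0$ and absorb the destabilization's $v^{-1}$ into $H_{\widehat{\alpha\delta\sigma_i}}$ (obtaining $n-3$); both bounds are below $n-1$, and your remark about why the $n$--strand form of \eqref{ineq:framed} is insufficient is exactly the point.
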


\begin{proof}
We saw in Lemma \ref{lem:kiteker} that by using isotopy, conjugation, and a single Markov destabilization, $\alpha\delta\sigma_i$ can be reduced to a braid word $\eta\in B_{n-1}$. By \eqref{ineq:framed}, this implies that any exponent of $v$ in $H_{\widehat\eta}(v,z)$ is $n-2$ or less. The labels $v^{-1}$ that appear along the path connecting $\eta$ to the root $\beta\Delta^2$ can only reduce that exponent. 
\end{proof}

We shall now concentrate on the part of $\widetilde\Gamma$ that is isomorphic to $\Gamma$. Note that at the terminal nodes, where the trivial braids of $\Gamma$ used to be, now there are copies of $\Delta^2$.

\begin{lemma}\label{lem:oldnode}
The contribution of such a terminal node $\widetilde x$ of $\widetilde\Gamma$ to the coefficient of $v^{n-1}$ in $H_{\widehat{\beta\Delta^2}}$ agrees (up to sign) with the contribution of the corresponding terminal node $x$ of $\Gamma$ to the coefficient of $v^{1-n}$ in $H_{\widehat\beta}$. (In fact, both contributions are a single power of $z$.)
\end{lemma}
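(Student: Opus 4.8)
The plan is to track exactly what happens to a terminal node of $\Gamma$ when we pass to $\widetilde\Gamma$, using the contribution formula \eqref{eq:contrib}. Let $x$ be a terminal node of $\Gamma$, labelled with some trivial braid $1_k$. Along the path from $x$ back to the root, $\Gamma$ performs some sequence of Conway splits (contributing factors $\pm 1$ and $\pm z$) and Markov destabilizations (contributing factors $v^{-1}$); let us say $d$ destabilizations occur on that path and let $h(x)=\pm z^{a}v^{-d}$ be the product of all edge labels, where $a\ge 0$ counts the number of ``$z$''-edges. Then $n-1-k=d$, because each destabilization on the path from $x$ to the root drops the strand number by one, from $n$ down to $k$. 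The contribution of $x$ to $H_{\widehat\beta}$ is
\[
\left(\frac{v^{-1}-v}{z}\right)^{k-1}h(x)=\pm z^{a-k+1}v^{-d}(v^{-1}-v)^{k-1},
\]
and the lowest $v$-power appearing here is $v^{-d}v^{-(k-1)}=v^{-(d+k-1)}=v^{1-n}$, occurring with coefficient $\pm z^{a-k+1}$ (the leading term of $(v^{-1}-v)^{k-1}$ being $v^{-(k-1)}$). So the contribution of $x$ to the coefficient of $v^{1-n}$ in $H_{\widehat\beta}$ is precisely $\pm z^{a-k+1}$, a single power of $z$.

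Now consider the corresponding terminal node $\widetilde x$ of $\widetilde\Gamma$. By construction it is labelled with $\Delta_n^2$ (the trivial braid of $\Gamma$ has turned into a full twist on $n$ strands), and the subtree of $\widetilde\Gamma$ below $\widetilde x$ is itself a computation tree for $\Delta_n^2$, so by the remark that ``the subtree generated by any vertex of a computation tree is a computation tree for that vertex,'' the contribution of $\widetilde x$ to $H_{\widehat{\beta\Delta^2}}$ is $H_{\widehat{\Delta_n^2}}(v,z)\cdot\widetilde h(\widetilde x)$, where $\widetilde h(\widetilde x)$ is the product of edge labels along the path from $\widetilde x$ to the root of $\widetilde\Gamma$. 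The point is that this path is (edge-for-edge) the image of the path from $x$ to the root of $\Gamma$ under the isomorphism of $\Gamma$ onto its copy inside $\widetilde\Gamma$: Conway splits and isotopies are carried over with identical labels, conjugations are replaced by an isotopy-then-conjugation (no new edge labels), and — crucially — each Markov destabilization of $\Gamma$ is replaced in $\widetilde\Gamma$ by a Conway split whose \emph{surviving} edge (pointing to $\alpha\Delta^2$) carries label $1$, not $v^{-1}$. Hence each $v^{-1}$ in $h(x)$ is replaced by $1$, and therefore $\widetilde h(\widetilde x)=\pm z^{a}$, the same thing as $h(x)$ but with the $v^{-d}$ factor deleted. (The sign may change, but we only claim agreement up to sign.)

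It remains to multiply through. By Proposition \ref{murakami}, the highest $v$-exponent in $H_{\widehat{\Delta_n^2}}(v,z)$ is $n-1$ and it occurs in the single term $(-1)^{n-1}z^{1-n}v^{n-1}$. So the contribution of $\widetilde x$ to the coefficient of $v^{n-1}$ in $H_{\widehat{\beta\Delta^2}}$ equals the coefficient of $v^{n-1}$ in $\pm z^{a}\cdot H_{\widehat{\Delta_n^2}}(v,z)$, namely $\pm z^{a}\cdot(-1)^{n-1}z^{1-n}=\pm z^{a-n+1}$. Comparing with the contribution $\pm z^{a-k+1}$ of $x$ computed above, the exponents differ by $(a-n+1)-(a-k+1)=k-n=-d$, which is exactly wrong by a power of $z$, not merely a sign. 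I therefore expect the main obstacle to be precise bookkeeping of the $z$-exponent: one must not forget that when a destabilization is replaced by a Conway split, the strand count in $\widetilde\Gamma$ does \emph{not} drop, so the ``ghost strands'' accumulate, and the correct statement is that the contribution of $\widetilde x$ is $H_{\widehat{\Delta_n^2\sqcup(\text{ghosts})}}$-weighted; re-reading, what actually sits at $\widetilde x$ is $\Delta_k^2$ together with $n-k$ trivial ghost strands, whose closure is $\widehat{\Delta_k^2}\sqcup(n-k\text{-component unlink})$, whose framed Homfly top-$v$ term is $(-1)^{k-1}z^{1-k}v^{k-1}\cdot(z^{1-n+k}v^{1-n+k}+\cdots)$; tracking this cancels the discrepancy and yields agreement up to sign with a single power of $z$ on each side. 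Organizing that computation cleanly — deciding once and for all whether to carry ghost strands inside the node label or inside the edge labels — is the one place where care is needed; everything else is the formula \eqref{eq:contrib} and Proposition \ref{murakami}.
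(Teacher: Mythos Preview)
Your setup and the computation of the contribution of $x$ to the coefficient of $v^{1-n}$ are correct (modulo the slip $n-1-k=d$, which should read $d=n-k$; your subsequent line $v^{-(d+k-1)}=v^{1-n}$ uses the correct value). The genuine error is the edge label on the surviving branch of the Conway split in $\widetilde\Gamma$. Writing $\alpha\sigma_i\Delta^2=\alpha\sigma_i^2\delta\sigma_i$ and applying the skein relation to one crossing of $\sigma_i^2$ gives
\[
H_{\widehat{\alpha\sigma_i^2\delta\sigma_i}}
= H_{\widehat{\alpha\,\delta\sigma_i}} + z\,H_{\widehat{\alpha\sigma_i\delta\sigma_i}}
= H_{\widehat{\alpha\,\delta\sigma_i}} + z\,H_{\widehat{\alpha\Delta^2}}.
\]
So the edge pointing to $\alpha\Delta^2$ carries label $z$, not $1$; it is the extra branch (to $\alpha\delta\sigma_i$) that carries label $1$. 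This is exactly what Figure~\ref{fig:kiteker} records and what the paper states in its proof: ``all $n-k$ of those $v^{-1}$ labels have been changed to $z$.'' With this single correction your own computation goes through: $h(\widetilde x)=\pm z^{a+d}=\pm z^{a+n-k}$, the terminal node $\widetilde x$ is labelled with $\Delta_n^2$, and Proposition~\ref{murakami} gives the coefficient of $v^{n-1}$ in the contribution of $\widetilde x$ as
\[
\pm z^{a+n-k}\cdot(-1)^{n-1}z^{1-n}=\pm(-1)^{n-1}z^{a-k+1},
\]
matching the contribution $\pm z^{a-k+1}$ of $x$ up to sign. The discrepancy you found was entirely an artifact of the wrong edge label.

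Your attempted fix at the end is therefore unnecessary and also rests on a misreading of the construction: the ghost strands are trivial only in the $\alpha$ portion of the word, while $\Delta^2=\Delta_n^2$ sits on all $n$ strands throughout the imitation phase; the terminal label at $\widetilde x$ is $\Delta_n^2$, not $\Delta_k^2$ together with an unlink.
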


\begin{proof}
Let us assume that $x$ is labeled with 
$1_k$. To arrive (in $\Gamma$) from $\beta$ to $1_k$, there had to be exactly $n-k$ Markov destabilizations. Therefore, $h(x)=\pm z^{m}v^{k-n}$, where $m$ is a natural number. Thus, the coefficient of $v^{1-n}$ in \eqref{eq:contrib} becomes $\pm z^{m-k+1}$.

Now in $\widetilde\Gamma$, all $n-k$ of those $v^{-1}$ labels have been changed to $z$ (compare Figures \ref{fig:markov} and \ref{fig:kiteker}), while other labels along the path remained the same. Hence $h(\widetilde x)=\pm z^{m+n-k}$. By Proposition \ref{murakami}, we see that the coefficient of $v^{n-1}$ in the contribution of $\widetilde x$ is $\pm z^{m+n-k}\cdot(-1)^{n-1}z^{1-n}=\pm(-1)^{n-1}z^{m-k+1}$, as desired.
\end{proof}

Put together, Lemmas \ref{lem:newbranch} and \ref{lem:oldnode} conclude the proof of Proposition \ref{pro:framed}.

\begin{megj}
It is not hard to turn our argument into an induction proof of the MFW inequalities \eqref{ineq:framed}.
\end{megj}

\section{Remarks and examples}

There are several indications that coefficients in the left and right columns of the Homfly polynomial are more geometrically significant than others. For starters, these numbers persist under certain standard changes of variables and/or normalizing conditions. For example, if we require
$H'_{\includegraphics[totalheight=8pt]{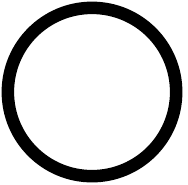}}=\frac{v^{-1}-v}z$ instead of $H_{\includegraphics[totalheight=8pt]{circle}}=1$,
then the left and right columns, up to sign and a downward shift, stay the same. The numbers (up to sign) do not change if we use $a=v^{-1}$, $l=-\sqrt{-1}\cdot v^{-1}$, $m=\sqrt{-1}\cdot z$ etc.\ either.

The next remark explains why the author examined these columns in the first place. The pattern of Theorem \ref{thm:jobbbal} emerged while using Knotscape \cite{knotscape} to browse through many examples.

\begin{megj}\label{rem:dan}
Rutherford \cite{dan} showed that if the knot type $K$ contains Legendrian representatives with sufficiently high Thurston--Bennequin number, then the coefficients in the left column of the Homfly polynomial $P_K(v,z)$ represent numbers of so-called oriented rulings (of various genera) of these Legendrian knots. Similarly, the right column may speak of oriented rulings of the mirror of $K$.

Theorems \ref{thm:jobbbal} and \ref{thm:preciz} however are not about oriented rulings. In particular, the method of the proof of Proposition \ref{murakami} does not extend to the general case. For instance, if $\beta$ is a positive braid (as in Example \ref{ex:torus}) then the left hand side of \eqref{eq:jobbbal} always contains counts of rulings \cite{imrn}, whereas the right hand side almost never does. 
\end{megj}

There is also the intriguing possibility that our results could be used to verify Jones's conjecture \cite{jones} that the writhe of a braid representative with minimum strand number is an oriented link invariant. Assume that the braids $\beta_1$ and $\beta_2$ are both on $n$ strands and they have isotopic closures, but their writhes disagree, say $w_1<w_2$. Suppose that the lower MFW estimate $w_2-n+1$ from $\beta_2$ is sharp. Then by \eqref{eq:jobbbal}, $P_{\widehat{\beta_2\Delta^2}}(v,z)$ contains non-zero coefficients at $v=w_2+n-1$, whereas by \eqref{eq:claim}, $P_{\widehat{\beta_1\Delta^2}}$ vanishes at $v=w_1+n-1$ and above. This is a contradiction if we also hypothesize that the closures of $\beta_1\Delta^2$ and $\beta_2\Delta^2$ are isotopic. There are too many ifs here, however. The last assumption certainly fails, as demonstrated in the next example. (Such counterexamples do not seem to be too common among small knots.)

\begin{pelda}\label{ex:12n187}
The $9$-crossing mirrored alternating knots $9_{27}^*$, $9_{30}^*$, and $9_{33}^*$ (Rolfsen's numbering) each have $4$-braid representatives that make the MFW inequality \eqref{ineq:mfw} sharp. For example, the braids listed in Knotinfo \cite{knotinfo}
\begin{multline*}
\sigma_1^{-2}\sigma_2\sigma_1^{-1}\sigma_2^{2}\sigma_3^{-1}\sigma_2\sigma_3^{-1}\text{, }\sigma_1^{-2}\sigma_2^{2}\sigma_1^{-1}\sigma_2\sigma_3^{-1}\sigma_2\sigma_3^{-1},\\
\text{and }\sigma_1^{-1}\sigma_2\sigma_1^{-1}\sigma_2^{2}\sigma_1^{-1}\sigma_3^{-1}\sigma_2\sigma_3^{-1},
\end{multline*} 
respectively have this property. Adding the full twist $\Delta_4^2$ to these braids, in each case we obtain a braid representative of the same knot $12n_{187}$ (from the Hoste-Thistlethwaite table). (In other words, $12n_{187}^*$ has three $4$-braid representatives so that adding a full positive twist to each, the three different knots $9_{27}$, $9_{30}$, and $9_{33}$ are obtained. Let us also mention that these braid representatives of $12n_{187}$ are mutually non-isotopic within the solid torus which is the complement of the braid axis.) 

The Homfly polynomials of $9_{27}^*$, $9_{30}^*$, and $9_{33}^*$ are different, but each has a left column (coefficient of $v^{-4}$) of $-z^2-1$. Of course, the right column (coefficient of $v^{14}$) in the Homfly polynomial of $12n_{187}$ is $z^2+1$.

$9_{27}$ is a slice knot while $9_{30}$ and $9_{33}$ are not. But even though they represent different concordance classes, all three have signature $0$. Their Khovanov and torsion Khovanov polynomials do not coincide yet have the same support with only minor differences in the coefficients.
\end{pelda}

To further underline that our main theorem is more about braids than knots, we end the paper with the following extension of Example \ref{ex:torus}. 

\begin{pelda}
We examine the effect of changing the braid $(\sigma_1\sigma_2)^5$ by positive and negative Markov stabilizations. Let us denote the resulting four-strand braids by $\beta$ and $\beta'$, respectively. Of course, the isotopy type of the closure of both is still the $(3,5)$ torus knot, but the lower MFW inequality \eqref{lower} remains sharp only in the first case. See Figure \ref{fig:mpm} for the Homfly polynomials after adding a full twist to each braid. Note how \eqref{eq:jobbbal} fails to work for $\beta'$. This is because in this case, \eqref{eq:claim} states that the columns of zeros indicated in Figure \ref{fig:mpm} agree. (In fact, the closure of $(\sigma_1\sigma_2)^5\sigma_3^{-1}\Delta_4^2$ is the torus knot $T(3,10)$.)

\begin{figure}
\labellist
\small
\pinlabel $z$ at 265 1550 
\pinlabel $z$ at 265 1070 
\pinlabel $z$ at 265 385 
\pinlabel $\beta\Delta^2=$ at 110 1355
\pinlabel $\beta'\Delta^2=$ at 120 190
\pinlabel $\beta'=$ at 80 875
\pinlabel $\beta=$ at 1010 875
\pinlabel $v=6$ at 550 680
\pinlabel $v=8$ at 640 1090
\pinlabel $v=26$ at 1465 1160
\pinlabel $v=24$ at 1375 -5
\pinlabel $v$ at 1575 1220
\pinlabel $v$ at 1575 740
\pinlabel $v$ at 1575 55
\pinlabel $\sigma_3^{-1}$ at 455 785
\pinlabel $\sigma_3$ at 1365 775
\endlabellist
\centering
\includegraphics[width=\linewidth]{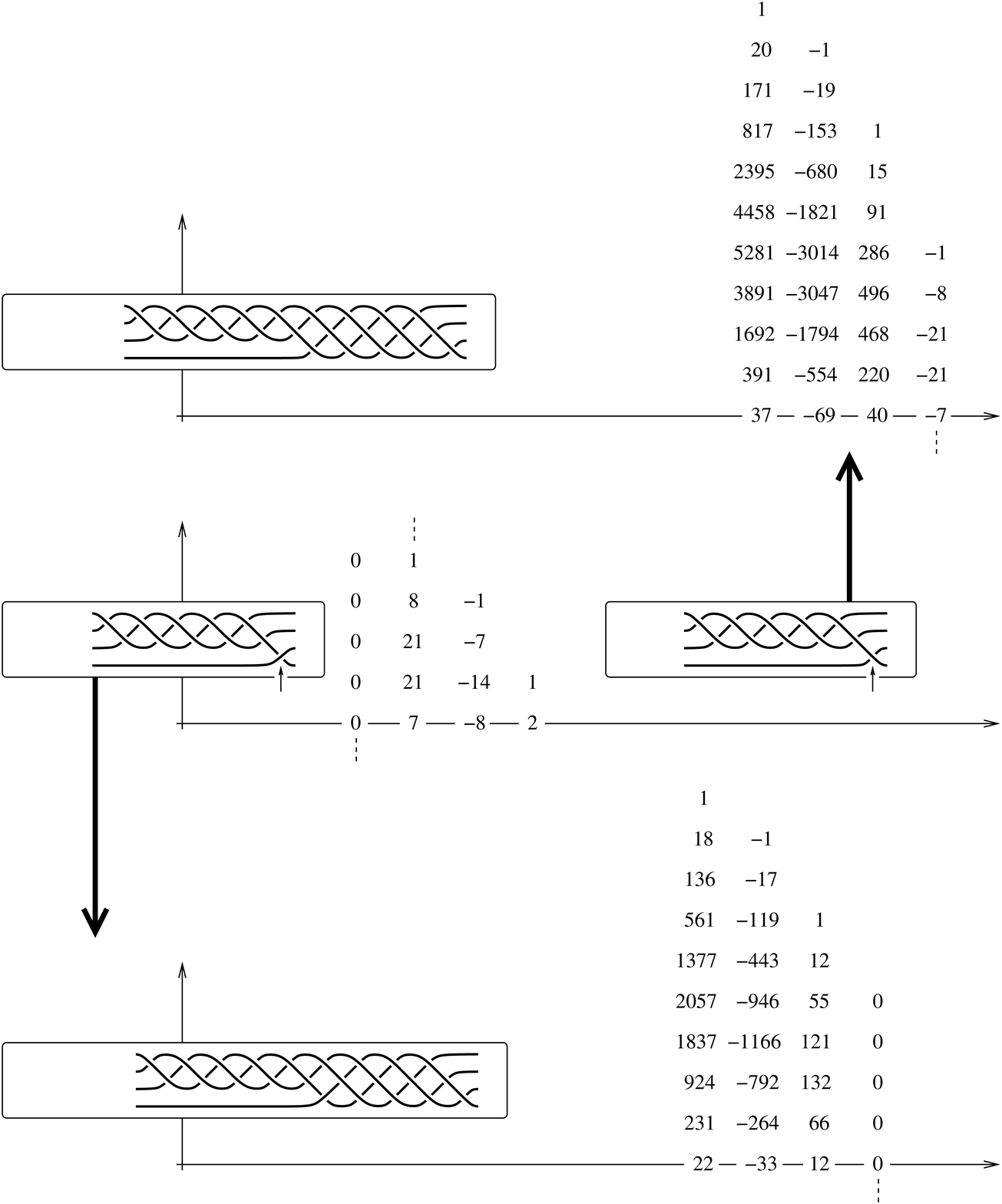}
\caption{Stabilizing a braid in two different ways before adding a full twist}
\label{fig:mpm}
\end{figure}

\end{pelda}

Acknowledgements: This paper was written while the author was a Japan Society for the Promotion of Science research fellow at the University of Tokyo. It is a particular pleasure to acknowledge the hospitality of Takashi Tsuboi. I would also like to thank Toshitake Kohno for his encouragement, as well as Lenny Ng for helpful e-mail discussions.



\bigskip

{\tt kalman@ms.u-tokyo.ac.jp}

\end{document}